\title{Congruences for Coefficients of Modular Functions}
\author{Paul Jenkins}
\author{DJ Thornton}
\date{\today}
\numberwithin{equation}{section}
\newtheorem{thm}{Theorem}
\newtheorem*{thmm}{Theorem}
\newtheorem{lem}{Lemma}
\newtheorem*{lemm}{Lemma}
\newtheorem{cor}{Corollary}
\begin{document}

\maketitle

\begin{abstract}
We examine canonical bases for weakly holomorphic modular forms of weight $0$ and level $p = 2, 3, 5, 7, 13$ with poles only at the cusp at $\infty$. We show that many of the Fourier coefficients for elements of these canonical bases are divisible by high powers of $p$, extending results of the first author and Andersen.  Additionally, we prove similar congruences for elements of a canonical basis for the space of modular functions of level $4$, and give congruences modulo arbitrary primes for coefficients of such modular functions in levels 1, 2, 3, 4, 5, 7, and 13.
\end{abstract}

\section{Introduction and Statement of Results}\label{sec:intro}
A holomorphic modular form of level $N$ and weight $k$ is a function $f(z)$ which is holomorphic on the complex upper half-plane,  satisfies the modular equation
\[
f\left(\frac{az+b}{cz+d}\right) = (cz+d)^kf(z) {\rm \ for\ all\ } {\small \begin{pmatrix}a & b \\ c & d\end{pmatrix}} \in \Gamma_0(N),
\]
and is holomorphic at the cusps of $\Gamma_0(N)$.  Here, as usual, \[\Gamma_0(N) = \left\{\begin{pmatrix}a\ b \\ c\ d\end{pmatrix} \in SL_2(\mathbb{Z}) : c \equiv 0\pmod{N}\right\}.\] If $f(z)$ is meromorphic at the cusps of $\Gamma_0(N)$, then we say $f$ is a weakly holomorphic modular form; additionally, if $f$ is weakly holomorphic of weight zero, we say $f$ is a level $N$ modular function. We denote by $M_k(N)$ the space of holomorphic level $N$ modular forms and by $M_k^!(N)$ the space of weakly holomorphic modular forms of level $N$. As a subspace of $M_k^!(N)$, we define the space $M_k^\sharp(N)$ to be the space of all modular forms of weight $k$ and level $N$ which are holomorphic except possibly at the cusp at $\infty$.

Every modular form has a Fourier expansion $f(z)=\sum_{n=n_0}^\infty a(n)q^n$, where $q = e^{2\pi iz}$; the coefficients $a(n)$ often encode arithmetic information and have been widely studied.  As an example, the classical $j$-invariant $j(z) = q^{-1} + \sum c(n)q^n$ is a modular function for $SL_2(\mathbb{Z})$.  In 1949, Lehner proved~\cite{Lehner1},~\cite{Lehner2} that its Fourier coefficients $c(n)$ satisfy the congruence
\[
c(2^a3^b5^c7^d) \equiv 0 \pmod{2^{3a+8}3^{2b+3}5^{c+1}7^d},
\]
showing that many of the coefficients $c(n)$ are divisible by large powers of small primes.   
Kolberg~\cite{Kolberg},~\cite{Kolberg2} and Aas~\cite{Aas} refined Lehner's work to give stronger congruences for the coefficients $c(n)$ modulo large powers of $p$ for $p \in \{2,3,5,7\}$. In~\cite{Griffin}, Griffin further extended these results by proving such congruences for every function in a canonical basis for $M_0^!(1)$.

For higher levels, Lehner showed that similar congruences hold for the coefficients of modular functions in $M_0^\sharp(p)$ with $p \in\{2, 3, 5, 7\}$ if the functions have integral Fourier coefficients and the the order of the pole at infinity is bounded appropriately.  
Andersen and the first author~\cite{Nick} extended Lehner's theorem to include all elements of a canonical basis for $M_0^\sharp(p)$, proving the following congruences, from which Lehner's results follow as a corollary. 
\begin{thmm}[\cite{Nick}, Theorem 2]
Let $p \in\{2,3,5,7\}$, and let $f_{0, m}^{(p)}(z) \in M_0^\sharp(p)$ be the unique weakly holomorphic modular form with Fourier expansion
\[
f_{0,m}^{(p)}(z) = q^{-m}+\sum\limits_{n=1}^\infty a_0^{(p)}(m,n)q^n.
\]
Suppose that $m=p^\alpha m'$ and $n = p^\beta n'$ with $(m',p)=1$ and $(n', p) = 1$. Then for $\beta > \alpha$, we have
\begin{align*} \nonumber
a_0^{(2)}(2^\alpha m',2^\beta n') \equiv 0 & \pmod{2^{3(\beta-\alpha)+8}} & {\rm if}\ p=2, \\
a_0^{(3)}(3^\alpha m',3^\beta n') \equiv 0 & \pmod{3^{2(\beta-\alpha)+3}} & {\rm if}\ p=3, \\
a_0^{(5)}(5^\alpha m',5^\beta n') \equiv 0 & \pmod{5^{(\beta-\alpha)+1}} & {\rm if}\ p=5, \\
a_0^{(7)}(7^\alpha m',7^\beta n') \equiv 0 & \pmod{7^{(\beta-\alpha)}} & {\rm if}\ p=7.
\end{align*}
\end{thmm}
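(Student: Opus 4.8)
The plan is to realize the whole space concretely through the Hauptmodul and then to exploit the operator $U_p$ together with the Fricke involution $W_p$ to transport information between the two cusps $\infty$ and $0$ of $X_0(p)$. Since $X_0(p)$ has genus $0$ for $p\in\{2,3,5,7\}$, the function field is generated by the eta-quotient
\[
t_p(z)=\left(\frac{\eta(z)}{\eta(pz)}\right)^{24/(p-1)}=q^{-1}+O(1),
\]
which has integer Fourier coefficients, a simple pole at $\infty$, and a zero at $0$. Each $f_{0,m}^{(p)}$ is then the unique monic Faber polynomial of degree $m$ in $t_p$ normalized so that the coefficients of $q^{-m+1},\dots,q^0$ vanish, so in particular every $a_0^{(p)}(m,n)\in\mathbb{Z}$. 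Using $\eta(-1/z)=\sqrt{-iz}\,\eta(z)$ one computes the Fricke relation $t_p|W_p=p^{12/(p-1)}\,t_p^{-1}$; writing $\psi_p=t_p^{-1}=q+O(q^2)$, this says that a modular function whose only pole is at $0$ is a combination of the functions $(t_p|W_p)^e=p^{12e/(p-1)}\psi_p^e$, each of which is $q$-integral and visibly carries explicit powers of $p$.

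First I would record the elementary but decisive identity, for coefficient extraction, that $a_0^{(p)}(m,n)=[q^n]f_{0,m}^{(p)}=[q^{n/p}]\,U_p f_{0,m}^{(p)}$ whenever $p\mid n$, where $U_p\big(\sum a(k)q^k\big)=\sum a(pk)q^k$. The behavior of $U_p f_{0,m}^{(p)}$ splits according to whether $p\mid m$. If $p\mid m$, say $m=pm_0$, then $U_p$ sends the principal part $q^{-m}$ to $q^{-m_0}$, and the vanishing of the intermediate coefficients of $f_{0,m}^{(p)}$ forces
\[
U_p f_{0,m}^{(p)}=f_{0,m_0}^{(p)}+g_m,
\]
where $g_m$ is a modular function that vanishes at $\infty$ and has its only pole at $0$. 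If instead $p\nmid m$, the principal part is annihilated and $U_p f_{0,m}^{(p)}=g_m$ is itself of this pole-at-$0$ type. In either case the Fricke relation shows that the Fourier coefficients of $g_m$ are divisible by a positive power of $p$. One checks moreover that $U_p$ preserves the space of functions vanishing at $\infty$ with a pole only at $0$ (spanned by the $\psi_p^e$), so this pole-at-$0$ regime is stable under further applications of $U_p$.

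These two facts drive an induction. Writing $m=p^\alpha m'$ and $n=p^\beta n'$, I would apply $U_p$ a total of $\beta$ times. The first $\alpha$ applications ``unwind'' the $p$-part of $m$ via the identity above, reducing $a_0^{(p)}(p^\alpha m',p^\beta n')$ to $a_0^{(p)}(m',p^{\beta-\alpha}n')$ plus correction terms extracted from the $g_m$, all of which already lie in the $p$-divisible pole-at-$0$ regime and are handled by the same induction. Since $p\nmid m'$, the next application lands entirely in the pole-at-$0$ regime, and each of the remaining $\beta-\alpha$ applications of $U_p$ multiplies the $p$-divisibility by a further fixed power of $p$. The hypothesis $\beta>\alpha$ is exactly what guarantees that at least $\beta-\alpha$ genuinely ``productive'' applications occur, producing the linear growth $3(\beta-\alpha),\,2(\beta-\alpha),\,(\beta-\alpha),\,(\beta-\alpha)$ in the exponents for $p=2,3,5,7$, with the additive constants $8,3,1,0$ recording the divisibility gained upon first entering that regime.

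The main obstacle is to pin down the exact power of $p$ gained at each step. The naive bound coming from $(t_p|W_p)^e=p^{12e/(p-1)}\psi_p^e$ overshoots, because $U_p$ preserves $q$-integrality and the re-expansion of $U_p(\psi_p^e)$ in the basis $\{\psi_p^{e'}\}$ introduces $p$-power denominators that partly cancel the gain; the true slopes $3,2,1,1$ and intercepts $8,3,1,0$ are what survives. Controlling this amounts to determining the exact $p$-adic valuation of the matrix of $U_p$ acting on the lattice spanned by the $\psi_p^e$, which I would extract from the explicit product expansion $\psi_p=q\prod_{n\geq1}(1-q^{pn})^{24/(p-1)}(1-q^n)^{-24/(p-1)}$ together with divisibility properties of the binomial coefficients appearing in $(1-q^n)^{\pm 24/(p-1)}$. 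Establishing these valuation estimates uniformly in $e$, and hence the stability of the bound under iteration, is the technical heart of the argument; once it is in place, the stated congruences follow by assembling the per-step gains along the $\beta$ applications of $U_p$.
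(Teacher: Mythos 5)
This statement is quoted from Andersen--Jenkins \cite{Nick} and is not proved in the present paper, so there is no in-paper argument to compare against line by line; but your outline does follow the standard Lehner-style strategy that \cite{Nick} uses (iterate $U_p$, split off the principal part, and control what remains via the Hauptmodul and the Fricke involution). The structural skeleton --- $U_pf_{0,pm_0}^{(p)}=f_{0,m_0}^{(p)}+g$ with $g$ supported in the span of the $\psi_p^e$, followed by $\beta-\alpha$ further ``productive'' applications of $U_p$ --- is the right one.

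The gap is that the entire quantitative content of the theorem is concentrated in the one step you defer. First, your qualitative claim that ``the Fricke relation shows that the Fourier coefficients of $g_m$ are divisible by a positive power of $p$'' does not follow from what you have written: $g_m$ being an integral polynomial in $\psi_p=q+O(q^2)$ imposes no $p$-divisibility at all on its $q$-coefficients (the identity $(t_p|W_p)^e=p^{12e/(p-1)}\psi_p^e$ only says that $\psi_p^e$ times an explicit power of $p$ is a Fricke transform; it says nothing about the coefficients of $g_m$ unless you know the coefficients of $g_m$ in the basis $\{\psi_p^e\}$ are themselves $p$-divisible). That divisibility is exactly Lehner's lemma: one must compute $U_p(\psi_p^e)=\sum_{e'}c_{e,e'}\psi_p^{e'}$ and prove sharp lower bounds on $v_p(c_{e,e'})$, uniformly in $e$ and stable under iteration. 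You correctly identify this as ``the technical heart,'' but you only gesture at how one ``would extract'' it from binomial-coefficient divisibilities; the slopes $3,2,1,1$ and intercepts $8,3,1,0$ --- i.e., the actual statement being proved --- are precisely the output of that unproven lemma. Until those valuation estimates are established (and shown to propagate through the correction terms $g_m$ generated at each of the first $\alpha$ steps, which your induction must also track), the argument does not yield the stated moduli.
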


Since this theorem gives congruences only for $\beta > \alpha$, it is a natural question whether similar congruences hold for the other coefficients. A quick glance at $f_{0,4}^{(2)}(z)$ shows that
\[
f_{0,4}^{(2)}(z) = q^{-4}- 196608q + 21491712q^2 - 864288768q^3 + \cdots. 
\]
For these first few coefficients, $\alpha = 2$ and $\beta < \alpha$, so the hypotheses of the theorem are not satisfied. Yet
\begin{align*}
196608 &=  2^{16}\cdot3, \\
21491712 &=  2^{12}\cdot3^2\cdot11\cdot53, \\
864288768 &=  2^{18}\cdot3\cdot7\cdot157.
\end{align*}
From this and other examples, it appears that when $\alpha > \beta$, the corresponding coefficients are also divisible by high powers of $p$.  The main result of this paper confirms this observation.
\begin{thm}\label{thm:main}
Let $p \in\{2, 3, 5, 7, 13\}$ and let $f_{0,m}^{(p)}(z) = q^{-m} + \sum\limits_{n=1}^{\infty} a_0^{(p)}(m,n)q^n $ be a weakly holomorphic modular form in $M_0^\sharp(p)$.  Let $m = p^\alpha m'$ and $n = p^\beta n'$ with $m', n'$ not divisible by $p$. Then for $\alpha > \beta$, we have
\begin{align*}
a_0^{(2)}(2^\alpha m',2^\beta n') \equiv 0 & \pmod{2^{4(\alpha-\beta)+8}} & {\rm if}\ p=2, \\
a_0^{(3)}(3^\alpha m',3^\beta n') \equiv 0 & \pmod{3^{3(\alpha-\beta)+3}} & {\rm if}\ p=3, \\
a_0^{(5)}(5^\alpha m',5^\beta n') \equiv 0 & \pmod{5^{2(\alpha-\beta)+1}} & {\rm if}\ p=5, \\
a_0^{(7)}(7^\alpha m',7^\beta n') \equiv 0 & \pmod{7^{2(\alpha-\beta)}} & {\rm if}\ p=7, \\
a_0^{(13)}(13^\alpha m',13^\beta n') \equiv 0 & \pmod{13^{\alpha-\beta}} & {\rm if}\ p = 13.
\end{align*}
\end{thm}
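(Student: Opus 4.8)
The plan is to combine three ingredients: the explicit realization of the basis through an eta-quotient Hauptmodul, the operators $U_p$ and $V_p$ to peel off factors of $p$ common to $m$ and $n$, and the Fricke involution $W_p$ to convert the surviving divisibility into the stated powers of $p$. For the arithmetic setup, for each $p \in \{2,3,5,7,13\}$ the function
\[
\psi_p(z) = \left(\frac{\eta(z)}{\eta(pz)}\right)^{24/(p-1)} = q^{-1} + O(1)
\]
is a Hauptmodul for the genus-zero group $\Gamma_0(p)$ with rational integer Fourier coefficients; the exponent $24/(p-1)$ is an integer exactly for these five primes, which is precisely why the theorem is restricted to them. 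Because the only weight-zero holomorphic forms on $X_0(p)$ are constants, each $f_{0,m}^{(p)}$ is the unique monic degree-$m$ polynomial in $\psi_p$ with no constant term and no intermediate principal part; in particular every $a_0^{(p)}(m,n)$ lies in $\mathbb{Z}$, so the congruences make sense.

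Next I would extract a recursion in the $q$-expansion from $U_p$. Since $U_p$ maps $M_0^!(\Gamma_0(p))$ to itself and kills the intermediate principal part, one finds that $f_{0,pm}^{(p)}\big|U_p = f_{0,m}^{(p)} + g_m$, where $g_m$ is a weight-zero form that is holomorphic at $\infty$ but has a pole only at the cusp $0$. Comparing $q$-expansions gives
\[
a_0^{(p)}(pm,pn) = a_0^{(p)}(m,n) + b_m(n),
\]
with $b_m(n)$ the $\infty$-coefficients of $g_m$. Iterating this identity strips a common factor of $p$ from $m$ and $n$ simultaneously, so that $(\alpha,\beta)$ is replaced by $(\alpha-1,\beta-1)$ while $\alpha-\beta$ is preserved. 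Thus, provided I can control the $p$-divisibility of the correction forms $g_m$, the whole theorem reduces to the base case $\beta = 0$, i.e.\ to coefficients $a_0^{(p)}(p^\gamma m', n')$ with $\gamma = \alpha - \beta$ and $p \nmid n'$.

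The divisibility is then produced by the Fricke involution $W_p\colon z \mapsto -1/(pz)$, which interchanges the two cusps and satisfies $\psi_p\big|W_p = p^{12/(p-1)}\,\psi_p^{-1}$. Applying $W_p$ to $f_{0,m}^{(p)} = P_m(\psi_p)$ turns it into a form with a pole only at $0$ whose expansion at that cusp is governed by $p^{12/(p-1)}\psi_p^{-1}$, and the cusp $0$ has width $p$; reading the coefficients coprime to $p$ off this expansion is what forces powers of $p$ into $a_0^{(p)}(p^\gamma m', n')$. Since the correction forms $g_m$ lie in the span of the $W_p$-images of lower basis elements, the same scaling simultaneously bounds the $b_m(n)$ appearing in the recursion. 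I would assemble everything by induction on $\alpha - \beta$, the case $\alpha - \beta = 1$ serving as the base and dovetailing with the cited theorem, whose $\beta > \alpha$ statement is the mirror image of this analysis carried out at the cusp $\infty$ rather than at $0$.

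I expect the heart of the difficulty to be the exact $p$-adic bookkeeping in the Fricke step: translating the scaling factor $p^{12/(p-1)}$ and the width-$p$ normalization into the precise slopes $4,3,2,2,1$ and the additive constants $8,3,1,0,0$, and, crucially, verifying that the pole-at-$0$ correction forms $g_m$ never erode the claimed bound. The coincidence that the additive constants agree with those of the cited theorem while each slope is larger by exactly one is the structural signal I would aim to explain: the shared constants should emerge from the same fixed $p$-adic valuation of $\psi_p$ at the base of the recursion, and the extra unit of slope from the single net factor contributed when the roles of the two cusps are interchanged.
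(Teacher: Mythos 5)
Your outline is, in principle, a recognizable route---essentially rerunning the Lehner/Andersen--Jenkins machinery with the roles of the cusps $0$ and $\infty$ exchanged---but as written it has a genuine gap: the entire quantitative content of the theorem is deferred rather than proved. The steps you flag as "provided I can control the $p$-divisibility of the correction forms $g_m$" and "the exact $p$-adic bookkeeping in the Fricke step" are not loose ends; they \emph{are} the theorem. Nothing in the proposal establishes that the coefficients $b_m(n)$ of the pole-at-$0$ corrections are divisible by the target powers of $p$, nor that the Fricke scaling $p^{12/(p-1)}$ together with the width-$p$ normalization produces exactly the slopes $4,3,2,2,1$ and the constants $8,3,1,0,0$; the closing sentence about "the extra unit of slope" is a heuristic, not an argument. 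The induction scheme is also not pinned down: you first propose stripping common factors of $p$ to reduce to $\beta=0$ (an induction on $\min(\alpha,\beta)$), and then separately propose an induction on $\alpha-\beta$ with base case $\alpha-\beta=1$, without proving either base case.

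The paper's proof is entirely different and sidesteps all of this. By the Bruinier--Ono--Rhoades theorem, $\theta(f_{0,m}^{(p)}) = -m\, f_{2,m}^{(p)}$, so $n\, a_0^{(p)}(m,n) = -m\, a_2^{(p)}(m,n)$. Zagier duality plus equation \eqref{abswitch} gives $a_2^{(p)}(m,n) = -b_0^{(p)}(n,m) = -a_0^{(p)}(n,m)$, hence
\[
a_0^{(p)}(m,n) = \frac{p^{\alpha}m'}{p^{\beta}n'}\, a_0^{(p)}(n,m).
\]
In $a_0^{(p)}(n,m)$ the pole order is $n=p^{\beta}n'$ and the exponent is $m=p^{\alpha}m'$ with $\alpha>\beta$, which is precisely the hypothesis of the already-proven Theorem 2 of Andersen--Jenkins; that theorem contributes $p^{c(\alpha-\beta)+d}$ with $(c,d) = (3,8),(2,3),(1,1),(1,0),(0,0)$ for $p=2,3,5,7,13$, and the integral prefactor $p^{\alpha-\beta}m'/n'$ contributes the extra $p^{\alpha-\beta}$ that raises each slope by one. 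This explains exactly the numerical coincidence you noticed (same constants, slopes larger by one) without any new cusp-$0$ analysis. Pursuing your plan would amount to reproving the cited theorem at the other cusp, a much longer and more delicate road whose hardest parts your sketch leaves open.
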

\noindent We remark that Theorem~\ref{thm:main} includes a congruence for $p=13$, while, as noted in~\cite{Nick}, for $\beta > \alpha$ the analogous result is a trivial congruence modulo $13^{0(\beta - \alpha)}$.  Additionally, we note that the theorem makes no divisibility predictions when $\alpha = \beta$.

When such a canonical basis is defined for $M_0^\sharp(4)$, similar congruences hold, giving the following theorem.
\begin{thm}\label{thm:2to4}
Let $f_{0,m}^{(4)}(z) = q^{-m} + \sum\limits_{n=1}^\infty a_0^{(4)}(m,n)q^{n}$ be a weakly holomorphic modular form in $M_0^\sharp (4)$. Let $m=2^\alpha m'$ and $n=2^\beta n'$ with $m',n'$ odd. Then
\[
a_0^{(4)}(2^\alpha m',2^\beta n') \equiv \begin{cases} 0 \pmod{2^{4(\alpha-\beta)+8}} \, \, {\rm if}\ \alpha > \beta, \\
 0  \pmod{2^{3(\beta-\alpha)+8}} \, \,  {\rm if}\ \beta > \alpha. \end{cases}
\]
\end{thm}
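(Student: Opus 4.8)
The plan is to relate the level~$4$ data to level~$2$, where Theorem~\ref{thm:main} and Theorem~2 of~\cite{Nick} already supply the two congruences, and to control the discrepancy using the explicit $2$-power structure of $\Gamma_0(4)$ at its extra cusp. Let $s=\eta(z)^8/\eta(4z)^8=q^{-1}-8+20q-\cdots$ and $t=\eta(z)^{24}/\eta(2z)^{24}=q^{-1}-24+276q-\cdots$ be the Hauptmoduln of $\Gamma_0(4)$ and $\Gamma_0(2)$, each with a pole only at $\infty$. A computation with the Ligozat formula shows that $s$ has a simple zero at the cusp $0$, takes the value $s(\tfrac12)=-16=-2^4$ at the third cusp of $\Gamma_0(4)$, and satisfies $s\,(s\mid W_4)=2^8$ under the Fricke involution, together with the degree-two relation
\[
s^2=t\,(s+16),\qquad\text{equivalently}\qquad s+s'=t,\quad ss'=-16\,t,
\]
where $s'$ is the Galois conjugate of $s$ over $\mathbb{Q}(t)$. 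The powers $2^4$ and $2^8$ occurring here are precisely the arithmetic input behind the exponents $4(\alpha-\beta)+8$ and $3(\beta-\alpha)+8$.

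First I would pass to level~$2$ by tracing. With $\gamma=\left(\begin{smallmatrix}1&0\\2&1\end{smallmatrix}\right)\in\Gamma_0(2)\setminus\Gamma_0(4)$ the deck transformation of $X_0(4)\to X_0(2)$, which fixes the cusp $0$ and interchanges $\infty$ and $\tfrac12$, the trace $f_{0,m}^{(4)}+f_{0,m}^{(4)}\mid\gamma$ is $\Gamma_0(2)$-invariant with principal part $q^{-m}$ and holomorphic at $0$, hence equals $f_{0,m}^{(2)}$ up to an additive constant. Comparing coefficients gives, for every $n\ge 1$,
\[
a_0^{(4)}(m,n)=a_0^{(2)}(m,n)-b_n^{(m)},
\]
where $b_n^{(m)}$ are the Fourier coefficients of the expansion of $f_{0,m}^{(4)}$ at the cusp $\tfrac12$. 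The term $a_0^{(2)}(m,n)$ already satisfies exactly the desired congruence in each of the two cases, by Theorem~\ref{thm:main} when $\alpha>\beta$ and by Theorem~2 of~\cite{Nick} when $\beta>\alpha$. The whole problem therefore collapses to proving that the cusp-$\tfrac12$ coefficients $b_n^{(m)}$ obey the same congruences.

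To analyze $b_n^{(m)}$ I would reduce $f_{0,m}^{(4)}=\Psi_m(s)$, with $\Psi_m\in\mathbb{Z}[x]$ monic of degree $m$, modulo $s^2=ts+16t$ to the normal form $f_{0,m}^{(4)}=A_m(t)+B_m(t)\,s$ with $A_m,B_m\in\mathbb{Z}[t]$, where the monomial reductions satisfy $A_{k+1}=16\,t\,B_k$ and $B_{k+1}=A_k+tB_k$; in particular every coefficient $[q^n]A_m$ with $n\ge1$ is divisible by $2^4$. Since $s\mid\gamma=s'=t-s$, we get $b_n^{(m)}=[q^n]\bigl(A_m(t)+tB_m(t)\bigr)-[q^n]\bigl(B_m(t)\,s\bigr)$, expressing $b_n^{(m)}$ through level~$2$ coefficients $a_0^{(2)}(k,n)$ and the coefficients of $s$. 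I would then induct on $\lvert\alpha-\beta\rvert$: the operators $U_2$ and $V_2$ shift the $2$-adic valuations $\alpha,\beta$ of the indices, each reduction step contributes a factor $2^4$ through $A_{k+1}=16\,tB_k$, and the value $s(\tfrac12)=-2^4$ together with $s\,(s\mid W_4)=2^8$ feeds the remaining divisibility, so that the two cases emerge at rates $4$ and $3$ per step with common base $2^8$, matching the level~$2$ theorems fed in at each stage.

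The hard part will be step three: bounding the $2$-adic valuation of the non-symmetric piece $B_m(t)\,s$ --- equivalently, of the cusp-$\tfrac12$ coefficients $b_n^{(m)}$ --- uniformly in $m$. This is the genuinely new difficulty, with no analogue in the prime-level proof of Theorem~\ref{thm:main}, since the prime levels have no third cusp. Unlike the symmetric part $A_m$, the product $B_m(t)\,s$ is not visibly a high power of $2$, and its valuation must be extracted from the interplay of the relation $s^2=t(s+16)$, the Fricke identity $s\,(s\mid W_4)=2^8$, and the $2$-adic valuations of the Taylor coefficients of $s$ at $\tfrac12$. Making this estimate compatible with the $U_2$/$V_2$ recursion, so that both $\alpha>\beta$ and $\beta>\alpha$ close up with exactly the stated exponents, is where the real work lies.
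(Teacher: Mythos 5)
There is a genuine gap: your own ``step three'' --- bounding the $2$-adic valuation of the cusp-$\tfrac12$ coefficients $b_n^{(m)}$ (equivalently of $B_m(t)\,s$) --- is the entire content of the theorem, and you explicitly leave it unproven. Once you write $a_0^{(4)}(m,n)=a_0^{(2)}(m,n)-b_n^{(m)}$ and observe that $a_0^{(2)}(m,n)$ already satisfies the desired congruences, nothing has been gained unless $b_n^{(m)}$ is shown to satisfy them too, and the machinery you propose for that (the relation $s^2=t(s+16)$, the Fricke identity, the valuations of the Taylor coefficients of $s$ at $\tfrac12$) is never carried to a conclusion. So as written this is a plan with an admitted hole, not a proof.

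The difficulty is an artifact of your decomposition; the paper's route makes the error term vanish identically. Two facts suffice. First, by Theorem 3 of~\cite{Haddock}, $a_0^{(4)}(m,n)=0$ whenever $m\not\equiv n\pmod 2$, which disposes of every case in which exactly one of $\alpha,\beta$ is zero; the remaining nontrivial cases have $\alpha,\beta\ge 1$, i.e.\ $m$ and $n$ both even. Second, since $f_{0,2m_0}^{(4)}$ has only even exponents, $V_2U_2$ acts as the identity on it, and comparing principal parts of $V_2\bigl(U_2(f_{0,2m_0}^{(4)})-f_{0,m_0}^{(2)}\bigr)$ forces $U_2(f_{0,2m_0}^{(4)})=f_{0,m_0}^{(2)}$ (Theorem~\ref{thm:basis}). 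Hence
$a_0^{(4)}(2^{\alpha}m',2^{\beta}n')=a_0^{(2)}(2^{\alpha-1}m',2^{\beta-1}n')$ exactly, with no discrepancy term, and since $\alpha-\beta$ is unchanged the two congruences follow verbatim from Theorem~\ref{thm:main} and Theorem~2 of~\cite{Nick}. (As a byproduct, this identity plus your trace relation shows a posteriori that your $b_n^{(m)}$ is a difference of two level-$2$ coefficients each satisfying the congruence --- so the bound you were seeking is true, but it is most easily obtained as a consequence of the theorem rather than as an input to it.) If you want to salvage your approach, replace the trace by $U_2$ and invoke the parity theorem; the algebraic relation between the Hauptmoduln is not needed.
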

\noindent This result follows from a natural relationship between the canonical bases for $M_0^\sharp(4)$ and $M_0^\sharp(2)$.

From these theorems, it is clear that many of the coefficients of these canonical bases are divisible by high powers of primes which divide the level.  It is a natural question whether congruences exist modulo powers of primes not dividing the level. For example, consider the following modular form of level 7:
\begin{equation*}
f_{0,5}^{(7)}(z) = q^{-5}-50q-180q^2+210q^3+860q^4-1428q^5+8400q^6-3675q^7- \cdots.
\end{equation*}
It is easy to see that each of the coefficients except that of $q^5$ is divisible by 5. We prove the following theorem, which holds for any prime $p$ not dividing the level.
\begin{thm}\label{thm:arbitrary}
Let $f_{0,m}^{(N)}(z) = q^{-m} + \sum\limits_{n=1}^\infty a_0^{(N)}(m,n) \in M_0^\sharp(N)$, where $N \in\{1,2,3,4,5,7,13\}$. Let $p$ be a prime not dividing $N$, and let $r \in\mathbb{Z}^+$. If $p \nmid n$, we have
\begin{equation*}
p^r | a_0^{(N)} (mp^r,n).
\end{equation*}
\end{thm}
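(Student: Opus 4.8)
The plan is to deduce Theorem~\ref{thm:arbitrary} from a single uniform estimate on $p$-adic valuations. Writing $\operatorname{ord}_p$ for the $p$-adic valuation and $a_0^{(N)}(M,n) = [q^n] f_{0,M}^{(N)}(z)$, I would prove the stronger statement
\[
\operatorname{ord}_p\!\big(a_0^{(N)}(M,n)\big) \ \geq\ \operatorname{ord}_p(M) - \operatorname{ord}_p(n) \qquad (M\geq 1,\ n\geq 1).
\]
Taking $M = mp^r$ and $p \nmid n$ then gives $\operatorname{ord}_p(a_0^{(N)}(mp^r,n)) \geq r + \operatorname{ord}_p(m) \geq r$, which is the theorem. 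The point of carrying the extra term $-\operatorname{ord}_p(n)$ is that it is exactly what lets an induction close: raising forms to the $p$th power forces one to track coefficients at \emph{all} exponents $n$, including those divisible by $p$, where no divisibility can be expected.

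I would argue by strong induction on the pole order $M$. When $p \nmid M$ the bound is vacuous, so assume $p \mid M$ and write $M = pM_1$ with $M_1 < M$. The form $\big(f_{0,M_1}^{(N)}\big)^p$ lies in $M_0^\sharp(N)$, has leading term $q^{-M}$, is holomorphic away from $\infty$, and has integral $q$-expansion. Because the canonical basis is unitriangular (each $f_{0,k}^{(N)} = q^{-k} + O(q)$ has no intermediate polar or constant terms), expanding in the basis gives
\[
\big(f_{0,M_1}^{(N)}(z)\big)^p \ =\ f_{0,M}^{(N)}(z) \ +\ \sum_{k=0}^{M-1} \lambda_k\, f_{0,k}^{(N)}(z), \qquad \lambda_k = \big[q^{-k}\big] \big(f_{0,M_1}^{(N)}(z)\big)^p \in \mathbb{Z},
\]
the identification of $\lambda_k$ with a single polar coefficient using the gap structure of the basis. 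Reading off the coefficient of $q^n$ (and noting $f_{0,0}^{(N)} = 1$ contributes nothing for $n \geq 1$) expresses $a_0^{(N)}(M,n)$ as the coefficient of $q^n$ in $\big(f_{0,M_1}^{(N)}\big)^p$ minus a sum of correction terms $\lambda_k\, a_0^{(N)}(k,n)$ with $k < M$, each controlled by the induction hypothesis applied to $f_{0,k}^{(N)}$.

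The heart of the matter is then a self-contained estimate on $p$th powers, which I would isolate as a lemma: if $h = \sum_\ell h_\ell q^\ell$ is an integral Laurent series with $h_0 = 0$ and $\operatorname{ord}_p(h_\ell) \geq w - \operatorname{ord}_p(\ell)$ for all $\ell \neq 0$, then $\operatorname{ord}_p([q^\mu] h^p) \geq (w+1) - \operatorname{ord}_p(\mu)$ for all $\mu \neq 0$. Applying this with $h = f_{0,M_1}^{(N)}$ and $w = \operatorname{ord}_p(M_1) = \operatorname{ord}_p(M) - 1$ (where the hypothesis on $h$ is exactly the induction hypothesis for $M_1$) bounds both the main coefficient $[q^n]\big(f_{0,M_1}^{(N)}\big)^p$ and each $\lambda_k = [q^{-k}]\big(f_{0,M_1}^{(N)}\big)^p$. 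Combining $\operatorname{ord}_p(\lambda_k) \geq \operatorname{ord}_p(M) - \operatorname{ord}_p(k)$ with the inductive bound $\operatorname{ord}_p(a_0^{(N)}(k,n)) \geq \operatorname{ord}_p(k) - \operatorname{ord}_p(n)$ makes every correction term have valuation at least $\operatorname{ord}_p(M) - \operatorname{ord}_p(n)$, which closes the outer induction.

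Finally, I would prove the lemma by induction on $w$, writing $V_p h := \sum_\ell h_\ell q^{p\ell}$ and $U_p h := \sum_\ell h_{p\ell} q^\ell$. The base case $w = 0$ is the classical Frobenius congruence $h^p \equiv V_p h \pmod p$, which kills the coefficients at exponents prime to $p$, the other exponents carrying a vacuous bound. For the inductive step I would split $h = A + V_p(U_p h)$, where $A$ collects the terms with exponent prime to $p$ (so $A$ is divisible by $p^w$) and $U_p h$ satisfies the hypothesis with parameter $w-1$. Expanding $h^p$ by the binomial theorem, every term involving $A$ is divisible by $p^{w+1}$, while the surviving term $\big(V_p(U_p h)\big)^p = V_p\big((U_p h)^p\big)$ is handled at exponents divisible by $p$ by the inductive case applied to $U_p h$. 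I expect this power-series lemma — and in particular pinning down the exponent-weighted valuation bound that is stable under both the $p$th-power map and the passage $h \mapsto U_p h$ — to be the main obstacle; once it is in place, the only modular input needed is the integrality and gap structure of the canonical bases, which hold for exactly the listed levels.
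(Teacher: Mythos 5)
Your argument is correct, but it is a genuinely different proof from the one in the paper. The paper applies the Hecke operator $T_p = U_p + p^{-1}V_p$ to $f_{0,m}^{(N)}$, uses the fact that $T_p$ preserves $M_0^\sharp(N)$ to write $T_p(f_{0,m}^{(N)}) = p^{-1}f_{0,mp}^{(N)} + f_{0,m/p}^{(N)}$ by matching principal parts, and compares coefficients of $q^n$ to get an exact recursion which telescopes to the identity $p^r\bigl(a_0(m,np^r)-a_0(m/p,np^{r-1})\bigr) = a_0(mp^r,n)-a_0(mp^{r-1},n/p)$; the theorem drops out since $a_0(mp^{r-1},n/p)=0$ when $p\nmid n$. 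You instead prove the equivalent valuation bound $\operatorname{ord}_p(a_0(M,n))\geq \operatorname{ord}_p(M)-\operatorname{ord}_p(n)$ by strong induction on the pole order, writing $\bigl(f_{0,M/p}^{(N)}\bigr)^p$ in the canonical basis and controlling everything with a Frobenius-type power-series lemma; I checked the lemma and both inductions, and they close as you describe (in particular the $i=p$ term $A^p$ is fine because $pw\geq w+1$ once $w\geq 1$). The trade-offs: the paper's route is much shorter and yields an exact identity rather than an inequality, but its normalization $p^{k-1}=p^{-1}$ of the $V_p$ term is exactly where the hypothesis $p\nmid N$ enters. Your route needs no Hecke theory at all — only integrality, the gap structure $q^{-m}+O(q)$, and closure of $M_0^\sharp(N)$ under multiplication — and as a consequence it does not actually use $p\nmid N$, so it proves the (weak) congruence $\operatorname{ord}_p(a_0(p^\alpha m', p^\beta n'))\geq \alpha-\beta$ even for $p\mid N$, a statement consistent with but much weaker than Theorem~\ref{thm:main}. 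The price is the double induction and the more delicate combinatorial lemma.
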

\noindent This result and its proof are analogous to similar divisibility results for the weights $k \in\{4,6,8,10,14\}$ appearing in~\cite{Duke}.

The rest of the paper now proceeds as follows: in Section \ref{sec:construct}, we explicitly construct canonical bases for $M_0^\sharp(p)$ and present some necessary background; Section \ref{sec:proof} proves Theorem \ref{thm:main}; Section \ref{sec:level4} describes the space $M_0^\sharp(4)$ and proves Theorem \ref{thm:2to4}; Section \ref{sec:arbitrary} contains the proof of Theorem \ref{thm:arbitrary}.

\section{Background and Canonical Bases} \label{sec:construct} 
For $p\in\{2,3,5,7, 13\}$, the congruence subgroup $\Gamma_0(p)$ has genus zero, and the space $M_0^!(p)$ is generated by powers of a single modular function known as a Hauptmodul.  A convenient Hauptmodul for $\Gamma_0(p)$ is given by
\begin{equation*}\label{eqn:Hauptmodul}
\psi^{(p)}(z) := \left(\frac{\eta(z)}{\eta(pz)}\right)^\frac{24}{p-1} =q^{-1} + O(1),
\end{equation*}
where $\eta(z)$ is the Dedekind eta function.  The modular form $\psi^{(p)}(z)$ is a modular function on $\Gamma_0(p)$ with a simple pole at $\infty$ and a simple zero at 0; its Fourier coefficients are integers. 

We define a canonical basis $\{f_{0, m}^{(p)}(z)\}_{m=0}^\infty$ for the space $M_0^\sharp(p)$ by letting $f_{0, m}^{(p)}(z)$ be the unique modular form in $M_0^\sharp(p)$ with Fourier expansion beginning $q^{-m} + O(q)$.  It is straightforward to see that $f_{0, m}^{(p)}(z)$ can be written as $F(\psi^{(p)}(z))$, where $F(x)$ is a polynomial in $x$ of degree $m$ with integer coefficients.  We write \[f_{0, m}^{(p)}(z) = q^{-m} + \sum_{n=1}^\infty a_0(m, n) q^n,\] so that $a_0(m, n)$ is the Fourier coefficient of $q^n$ in the $m$th basis element.

We note that this is an extension of the basis given in~\cite{Doud} for $M_0(p)$, and that similar bases can be defined for $M_k^\sharp(p)$ for any even weight $k$.  In all weights, these bases consist of forms $f_{k, m}^{(p)}(z)$ whose first few Fourier coefficients are $1, 0, \ldots, 0$, with the number of zeros as large as possible.  Thus, we have $f_{k, m}^{(p)}(z) = q^{-m} + \sum_{n \geq n_0} a_k^{(p)}(m, n) q^n$.

A similar construction gives a basis $\{g_{k, m}^{(p)}(z)\}$ for the subspace of $M_k^\sharp(p)$ consisting of forms which vanish at all cusps except possibly at $\infty$.  These $g_{k, m}^{(p)}(z)$ have Fourier expansion \[g_{k, m}^{(p)}(z) = q^{-m} + \sum_{n \geq n_0}b_k^{(p)}(m, n)q^n.\]  We note that in weight $0$, the only difference between these bases is the constant term, so we have 
\begin{equation} \label{abswitch} a_0^{(p)}(m, n) = b_0^{(p)}(m, n) \, \, \, \, \, \, \textrm{if} \, n \neq 0.\end{equation}

We additionally recall that for a prime $p$, the $U_p$ and $V_p$ operators (see~\cite{Atkin}) are defined as follows: for a modular form $f(z) = \sum_{n=n_0}^\infty a(n)q^n \in M_k^!(N)$, we have
\begin{align*}
U_p(f(z)) &=  \sum\limits_{n=n_0}^\infty a(pn)q^n \in M_k^{!}(pN), \\
V_p(f(z)) &=  \sum\limits_{n=n_0}^\infty a(n)q^{pn} \in M_k^{!}(pN).
\end{align*}
If $p|N$, then we actually have $U_p(f(z)) \in M_k^!(N)$, while if $p^2 | N$, then $U_p(f(z)) \in M_k^!(N/p)$.  Additionally, for a form $f(z) \in M_k^!(N)$ and a prime $p \nmid N$, the action of the standard Hecke operator $T_p$ is given by
\begin{equation}\label{Hecke}
T_p(f(z)) = U_p(f(z)) + p^{k-1}V_p(f(z)) \in M_k^{!}(N).
\end{equation}

We also make use of the Ramanujan theta operator~\cite{Ahlgren}, which acts on a modular form $f(z)$ by the rule
\begin{equation*}
\theta f(z) = q \frac{d}{dq}f(z),
\end{equation*}
so that \[\theta (\sum a(n) q^n) = \sum n a(n) q^n.\]
The theta operator maps a modular form of level $k$ to a quasi-modular form of weight $k+2$; it preserves holomorphicity but not modularity.

\section{Proof of Theorem \texorpdfstring{\ref{thm:main}}{1}}\label{sec:proof}
We begin the proof of Theorem~\ref{thm:main} with the following Zagier-type duality result for the Fourier coefficients of the basis elements $f_{k, m}^{(p)}(z)$ and $g_{k, m}^{(p)}(z)$.  This was proven in~\cite{Sharon} for levels 2 and 3; it follows from work of El-Guindy~\cite{El-Guindy}, which allows for easy extension to levels 5, 7, and 13 as well.
\begin{lemm}
Let $k$ be an even integer and let $p \in \{2, 3, 5, 7, 13\}$. For all integers $m$ and $n$, the equality
\begin{equation*}
a_k^{(p)}(m,n) = -b_{2-k}^{(p)}(n,m)
\end{equation*}
holds for the Fourier coefficients of the modular forms $f_{k,m}^{(p)}$ and $g_{2-k,n}^{(p)}$.
\end{lemm}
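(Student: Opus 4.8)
The plan is to build a weight-$2$ weakly holomorphic form out of the two basis elements and then apply the residue theorem on the modular curve $X_0(p)$. Specifically, I would consider the product
\[
h(z) = f_{k,m}^{(p)}(z)\, g_{2-k,n}^{(p)}(z),
\]
which, being a product of forms of weights $k$ and $2-k$, is a weakly holomorphic modular form of weight $2$ on $\Gamma_0(p)$. The associated differential $h(z)\,dz$ is then a meromorphic $1$-form on the compact Riemann surface $X_0(p)$, and the classical fact that the residues of a meromorphic differential sum to zero will drive the proof.

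Next I would locate the poles. Since $f_{k,m}^{(p)}$ is holomorphic away from $\infty$ and $g_{2-k,n}^{(p)}$ vanishes at every cusp other than $\infty$, the product $h$ is holomorphic on the upper half-plane, vanishes at the cusp $0$, and has a pole only at $\infty$. Hence $h(z)\,dz$ can have a nonzero residue only at $\infty$, and the residue theorem forces that residue to vanish. Writing $q = e^{2\pi i z}$ and $dz = \tfrac{dq}{2\pi i q}$, the residue at $\infty$ is $\tfrac{1}{2\pi i}$ times the coefficient of $q^0$ in $h$, so I conclude that this constant term is $0$.

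It then remains to read off the constant term from the $q$-expansions. Expanding
\[
[q^0]\,h = \sum_{i} a_k^{(p)}(m,i)\, b_{2-k}^{(p)}(n,-i),
\]
the leading term $q^{-m}$ of $f_{k,m}^{(p)}$ contributes $b_{2-k}^{(p)}(n,m)$ and the leading term $q^{-n}$ of $g_{2-k,n}^{(p)}$ contributes $a_k^{(p)}(m,n)$. I expect the main obstacle to be showing that these are the only two surviving terms: this is precisely where the ``maximal leading zeros'' defining property of the canonical bases enters, as the gaps in the expansions of the weight-$k$ and weight-$(2-k)$ bases are complementary, forcing every remaining cross term $a_k^{(p)}(m,i)\,b_{2-k}^{(p)}(n,-i)$ to vanish. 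Establishing this complementarity amounts to a Riemann--Roch / valence computation at the two cusps, for which the explicit basis construction together with the work of El-Guindy~\cite{El-Guindy} in levels $5,7,13$ (and the levels $2,3$ treated in~\cite{Sharon}) supply the needed gap data. Once only the diagonal terms remain, the vanishing of $[q^0]\,h$ reads $a_k^{(p)}(m,n) + b_{2-k}^{(p)}(n,m) = 0$, which is exactly the claimed duality.
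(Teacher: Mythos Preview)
Your residue-theorem approach is correct and is the standard route to Zagier-type dualities of this kind. The paper itself does not supply a proof of this lemma; it simply cites \cite{Sharon} for levels $2$ and $3$ and invokes El-Guindy's framework \cite{El-Guindy} to cover levels $5$, $7$, and $13$. What you have written is essentially the argument underlying those references: form the weight-$2$ product $f_{k,m}^{(p)} g_{2-k,n}^{(p)}$, observe that the vanishing of $g_{2-k,n}^{(p)}$ at the cusp $0$ forces the associated differential to have its only possible pole at $\infty$, and read off the vanishing of the constant term.

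Your identification of the one genuine technical point is accurate: the claim that only the two ``diagonal'' cross terms survive is exactly the complementarity of the gap sequences for the $f_{k,\bullet}^{(p)}$ and $g_{2-k,\bullet}^{(p)}$ bases. Concretely, if $f_{k,m}^{(p)} = q^{-m} + O(q^{n_0})$ and $g_{2-k,n}^{(p)} = q^{-n} + O(q^{m_0})$, one needs $n_0 + m_0 \geq 1$, which is precisely the valence/Riemann--Roch statement that the first allowed pole order for $g_{2-k,\bullet}^{(p)}$ is one more than the last forced zero of $f_{k,\bullet}^{(p)}$. El-Guindy's paper packages this via a closed-form two-variable generating function, while \cite{Sharon} carries it out explicitly for $p=2,3$; your direct residue computation is an equivalent and arguably more transparent formulation of the same idea.
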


We will also use the following result, which is a special case of Theorem 1.1 in the paper~\cite{BOR} of Bruinier, Ono, and Rhoades.
\begin{thmm}
If $f(z) \in M_0^!(N)$, then $\theta(f) \in M_2^!(N)$.
\end{thmm}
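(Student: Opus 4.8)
The plan is to exploit the fact that the theta operator fails to preserve modularity only through a correction term proportional to the weight, and that this term vanishes identically in weight $0$ (it is the term that would otherwise force the quasimodular series $E_2$ to appear). Concretely, I would start from the transformation law $f(\gamma z) = (cz+d)^k f(z)$, valid for $\gamma = \left(\begin{smallmatrix} a & b \\ c & d\end{smallmatrix}\right) \in \Gamma_0(N)$, differentiate both sides in $z$, and use $\frac{d}{dz}(\gamma z) = (cz+d)^{-2}$. Writing $\theta = \frac{1}{2\pi i}\frac{d}{dz}$, this yields
\[
(\theta f)(\gamma z) = (cz+d)^{k+2}(\theta f)(z) + \frac{kc}{2\pi i}(cz+d)^{k+1} f(z).
\]
Setting $k = 0$ kills the second term, so $(\theta f)(\gamma z) = (cz+d)^2 (\theta f)(z)$ for every $\gamma \in \Gamma_0(N)$; that is, $\theta f$ satisfies the weight-$2$ modular transformation. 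This is the crux of the argument and the reason the statement is so clean in weight zero.

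Next I would verify holomorphy on the upper half-plane and meromorphy at the cusps. Holomorphy on $\mathbb{H}$ is immediate, since $f$ is holomorphic there and differentiation preserves holomorphy. For the cusp at $\infty$ I would simply note that if $f = \sum_{n \geq n_0} a(n) q^n$ then $\theta f = \sum_{n \geq n_0} n\, a(n) q^n$, which has the same finite principal part, so $\theta f$ is meromorphic at $\infty$.

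For the remaining cusps I would pass to a scaling matrix. Choosing $\sigma \in SL_2(\mathbb{Z})$ with $\sigma\infty = \mathfrak{a}$, a short chain-rule computation shows that $\theta$ intertwines the weight-$0$ and weight-$2$ slash operators, namely $(\theta f)|_2 \sigma = \theta\bigl(f|_0 \sigma\bigr)$, where $(f|_0\sigma)(z) = f(\sigma z)$. Since $f$ is weakly holomorphic, $g := f|_0\sigma$ has a $q$-expansion in the local parameter $q_h = e^{2\pi i z/h}$ with only finitely many negative terms; applying $\theta$ multiplies the coefficient of $q_h^n$ by $n/h$ and hence again leaves finitely many negative terms. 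Thus $(\theta f)|_2\sigma$ is meromorphic at $\infty$, i.e. $\theta f$ is meromorphic at $\mathfrak{a}$. Combining the three points gives $\theta f \in M_2^!(N)$.

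I expect the only subtle points to be the intertwining identity $(\theta f)|_2\sigma = \theta(f|_0\sigma)$ and the bookkeeping of the cusp width $h$; everything else follows directly from the vanishing of the weight-dependent correction term when $k=0$, which makes the weight-$2$ transformation automatic.
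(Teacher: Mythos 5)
Your proof is correct, but it takes a more elementary route than the paper, which does not argue from first principles at all: the paper simply cites the statement as a special case of Theorem 1.1 of Bruinier--Ono--Rhoades and remarks that it ``follows from Bol's identity and the fact that $M_0^!(N)$ is a subspace of the space of harmonic weak Maass forms.'' Your direct computation --- differentiating the transformation law and observing that the obstruction term $\frac{kc}{2\pi i}(cz+d)^{k+1}f(z)$ carries a factor of $k$ and hence dies in weight $0$ --- is exactly right, and your intertwining identity $(\theta f)|_2\sigma = \theta(f|_0\sigma)$ is precisely the $k=2$ instance of Bol's identity $D^{k-1}(f|_{2-k}\gamma) = (D^{k-1}f)|_k\gamma$ that the paper invokes; you have verified it by hand rather than quoting it. What your approach buys is a self-contained argument requiring nothing beyond the chain rule and the local $q$-expansions at the cusps (your bookkeeping with the width $h$ and the factor $n/h$ is correct); what the paper's citation buys is placement of the weight-$0$ case inside the general framework for $M_{2-k}^!$ and harmonic weak Maass forms, where for $k>2$ the operator $D^{k-1}$ is genuinely needed and no single differentiation suffices. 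For the purposes of this paper either justification is adequate, and yours has the virtue of making transparent why the weight-$0$ case is ``clean.''
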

\noindent This follows from Bol's identity and the fact that $M_0^!(N)$ is a subspace of the space of harmonic weak Maass forms.

Applying this theorem to the basis elements $f_{0, m}^{(p)}(z)$, we have the following corollary.
\begin{cor} \label{theta}
We have $\theta(f_{0, m}^{(p)}) = -m \cdot f_{2, m}^{(p)}$.
\end{cor}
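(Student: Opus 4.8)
The plan is to apply the theta operator directly to the $q$-expansion of $f_{0,m}^{(p)}$, use the quoted Bruinier--Ono--Rhoades theorem to guarantee that the result is again a genuine weight-$2$ modular form rather than merely quasi-modular, and then identify it with $-m\,f_{2,m}^{(p)}$ through a uniqueness argument for the canonical basis. Throughout I would assume $m \geq 1$; the case $m=0$ is the trivial identity $0=0$, since $f_{0,0}^{(p)}$ is constant and is annihilated by $\theta$.

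First I would compute the image explicitly. Because $f_{0,m}^{(p)} = q^{-m} + \sum_{n\geq 1} a_0(m,n)\,q^n$ has no constant term, applying $\theta = q\frac{d}{dq}$ yields
\[
\theta\!\left(f_{0,m}^{(p)}\right) = -m\,q^{-m} + \sum_{n\geq 1} n\,a_0(m,n)\,q^n ,
\]
so the image has leading term $-m\,q^{-m}$, vanishing coefficients in degrees $-m+1,\dots,0$, and in particular no constant term. By the special case of the Bruinier--Ono--Rhoades theorem, $\theta(f_{0,m}^{(p)}) \in M_2^!(p)$; and since the theta operator preserves holomorphicity while $f_{0,m}^{(p)}$ is holomorphic away from the cusp at $\infty$, the image is holomorphic away from $\infty$ as well. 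Hence $\theta(f_{0,m}^{(p)}) \in M_2^\sharp(p)$, and $-\tfrac1m\theta(f_{0,m}^{(p)})$ is an element of $M_2^\sharp(p)$ with $q$-expansion $q^{-m} + O(q)$.

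It then remains to show that this form is exactly $f_{2,m}^{(p)}$, which I would deduce from uniqueness. The key input is that $\Gamma_0(p)$ has genus $0$ for $p \in \{2,3,5,7,13\}$, so $S_2(\Gamma_0(p)) = 0$ and $M_2(\Gamma_0(p))$ is spanned by a weight-$2$ Eisenstein series that does not vanish at $\infty$. Consequently the only holomorphic weight-$2$ form on $\Gamma_0(p)$ that is $O(q)$ is $0$, so any two forms in $M_2^\sharp(p)$ of the shape $q^{-m}+O(q)$ agree, their difference lying in $M_2(\Gamma_0(p))$ and being $O(q)$. The form $-\tfrac1m\theta(f_{0,m}^{(p)})$ is of this shape, hence is the unique such form; a short comparison then shows it realizes the maximal number of leading zero coefficients: the existence of a form $q^{-m}+O(q)$ forces the maximum to be at least $m$, and any form with strictly more leading zeros would in particular kill degrees $-m+1,\dots,0$ and so, by uniqueness, would coincide with $-\tfrac1m\theta(f_{0,m}^{(p)})$, contradicting that it has more zeros. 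This identifies $-\tfrac1m\theta(f_{0,m}^{(p)}) = f_{2,m}^{(p)}$, which is the assertion. I expect the main obstacle to be precisely this uniqueness step: one must confirm that passing from weight $0$ to weight $2$ introduces no obstruction to killing the coefficients in degrees $-m+1,\dots,0$, that is, that the weight-$2$ normalization $q^{-m}+O(q)$ is both attainable and forced, which is exactly what the vanishing of $S_2(\Gamma_0(p))$ supplies.
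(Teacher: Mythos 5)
Your proposal is correct and follows essentially the same route as the paper: apply $\theta$, invoke the Bruinier--Ono--Rhoades theorem to land in $M_2^!(p)$ (hence in $M_2^\sharp(p)$), read off the leading term $-mq^{-m}+O(q)$, and identify the result with $-m f_{2,m}^{(p)}$ via the canonical basis. The only difference is that you spell out the uniqueness step (via $S_2(\Gamma_0(p))=0$) that the paper compresses into the single sentence ``the $f_{2,n}^{(p)}$ form a basis,'' which is a harmless elaboration.
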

\begin{proof}
Applying the $\theta$-operator to $f_{0, m}^{(p)}(z) = q^{-m} + O(q)$ gives a modular form of weight $2$ and level $p$ with Fourier expansion beginning $-mq^{-m} + O(q)$.  As the $f_{2, n}^{(p)}$ with $n \geq 0$ form a basis for $M_2^\sharp(p)$, this must be $-m f_{2, m}^{(p)}$.
\end{proof}
\noindent Looking at the action of the $\theta$-operator on the Fourier coefficients of these functions, it follows that $a_0(m,n) = \frac{-m}{n}a_2(m,n)$.

We now prove the main theorem, which we restate here for convenience.
\begin{thmm} Let $p \in\{2,3,5,7, 13\}$ and let $f_{0,m}^{(p)}(z) = q^{-m} + \sum\limits_{n=1}^\infty a_0^{(p)}(m,n)q^n \in M_0^\sharp(\Gamma_0(p))$ be an element of the basis described previously with $m=p^\alpha m'$ and $n=p^\beta n'$ with $m',n'$ not divisible by $p$. Then for $\alpha > \beta$, we have
\begin{align*}
a_0^{(2)}(2^\alpha m',2^\beta n') \equiv 0 & \pmod{2^{4(\alpha-\beta)+8}} & {\rm if}\ p=2, \\
a_0^{(3)}(3^\alpha m',3^\beta n') \equiv 0 & \pmod{3^{3(\alpha-\beta)+3}} & {\rm if}\ p=3, \\
a_0^{(5)}(5^\alpha m',5^\beta n') \equiv 0 & \pmod{5^{2(\alpha-\beta)+1}} & {\rm if}\ p=5, \\
a_0^{(7)}(7^\alpha m',7^\beta n') \equiv 0 & \pmod{7^{2(\alpha-\beta)}} & {\rm if}\ p=7, \\
a_0^{(13)}(13^\alpha m',13^\beta n') \equiv 0 & \pmod{13^{\alpha-\beta}} & {\rm if}\ p = 13.
\end{align*}
\end{thmm}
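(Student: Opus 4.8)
The plan is to reduce the case $\alpha > \beta$ to the already-known case $\beta > \alpha$ of the cited theorem of Andersen and the first author, by exploiting a symmetry of the coefficients $a_0^{(p)}(m,n)$ that drops out of combining the two preliminary results just recorded. First I would derive that symmetry. Setting $k = 2$ in the duality Lemma gives $a_2^{(p)}(m,n) = -b_0^{(p)}(n,m)$, and since the second index $m$ is nonzero, \eqref{abswitch} lets me rewrite this as $a_2^{(p)}(m,n) = -a_0^{(p)}(n,m)$. On the other hand, the remark following Corollary~\ref{theta} gives $a_0^{(p)}(m,n) = \frac{-m}{n}\,a_2^{(p)}(m,n)$ for $n \neq 0$. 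Substituting the first identity into the second produces the clean relation
\[
n \cdot a_0^{(p)}(m,n) = m \cdot a_0^{(p)}(n,m),
\]
valid for all $m, n \geq 1$.

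Next I would pass to $p$-adic valuations. With $m = p^\alpha m'$ and $n = p^\beta n'$, $p \nmid m'n'$, taking $v_p$ of both sides of the symmetry relation yields
\[
\beta + v_p\!\bigl(a_0^{(p)}(m,n)\bigr) = \alpha + v_p\!\bigl(a_0^{(p)}(n,m)\bigr),
\]
hence $v_p\bigl(a_0^{(p)}(m,n)\bigr) = (\alpha-\beta) + v_p\bigl(a_0^{(p)}(n,m)\bigr)$. The essential observation is that in the dual coefficient $a_0^{(p)}(n,m)$ the two indices have traded roles: its first index $n$ now carries the smaller power $p^\beta$ and its second index $m$ the larger power $p^\alpha$. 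Since $\alpha > \beta$, this places $a_0^{(p)}(n,m)$ squarely under the hypothesis of the cited theorem, with the relevant gap between valuations equal to $\alpha - \beta$.

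For $p \in \{2,3,5,7\}$ I would then invoke that theorem to bound $v_p\bigl(a_0^{(p)}(n,m)\bigr)$ below by $3(\alpha-\beta)+8$, $2(\alpha-\beta)+3$, $(\alpha-\beta)+1$, and $(\alpha-\beta)$ respectively, and add the extra $\alpha-\beta$ supplied by the valuation identity; the four sums are exactly $4(\alpha-\beta)+8$, $3(\alpha-\beta)+3$, $2(\alpha-\beta)+1$, and $2(\alpha-\beta)$, matching the stated moduli. The case $p=13$ needs no input theorem at all: the coefficients are integers, so $v_{13}\bigl(a_0^{(13)}(n,m)\bigr) \geq 0$, and the valuation identity alone forces $v_{13}\bigl(a_0^{(13)}(m,n)\bigr) \geq \alpha-\beta$. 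This is why a nontrivial congruence survives at $p=13$ here even though the $\beta > \alpha$ direction is vacuous there.

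I do not expect a serious obstacle once the symmetry relation is established; its derivation is the only real content, and the rest is bookkeeping. The points that do require care are the nonvanishing conditions --- both the division by $n$ in the theta relation and the use of \eqref{abswitch} demand $m, n \geq 1$, which holds since $\alpha > \beta \geq 0$ forces $m \geq p$ and $n \geq 1$ --- and the final check that the five arithmetic identities align precisely with the exponents claimed in the theorem.
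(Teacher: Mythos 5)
Your proposal is correct and follows essentially the same route as the paper: combine the theta-operator identity $n\,a_0^{(p)}(m,n) = -m\,a_2^{(p)}(m,n)$ with duality and \eqref{abswitch} to get $n\,a_0^{(p)}(m,n) = m\,a_0^{(p)}(n,m)$, then apply the Andersen--Jenkins theorem to the dual coefficient $a_0^{(p)}(n,m)$ and pick up the extra factor $p^{\alpha-\beta}$. Your explicit treatment of $p=13$ via integrality of the coefficients is exactly what the paper's ``the argument \ldots is similar'' elides.
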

\begin{proof} Let $m=p^\alpha m'$ and $n=p^\beta n'$ with $m',n'$ not divisible by $p$. Let $a_0^{(p)}(m, n) = a_0^{(p)}(p^\alpha m',p^\beta n')$ be any coefficient where $\alpha > \beta$. Looking at the coefficient of $q^n$ in Corollary~\ref{theta}, we have $n a_0^{(p)}(m, n) = -m a_2^{(p)}(m, n)$.  On the other hand, using Zagier duality and equation~\eqref{abswitch}, we have $a_2^{(p)}(m, n) = -b_0^{(p)}(n, m) = -a_0^{(p)}(n, m)$.   Thus, we have $a_0^{(p)}(m, n)= -\frac{m}{-n}a_0^{(p)}(n, m) = \frac{p^\alpha m'}{p^\beta n'}a_0^{(p)}(n,m)$.  Note that all coefficients are integers here.

Recall that $a_0^{(p)}(n, m)$ represents the coefficient of $q^m$ in the weight zero basis element starting with $q^{-n}$. Since $\alpha > \beta$, a higher power of $p$ divides the exponent than divides the order of the pole, and we can apply Theorem 2 of~\cite{Nick}.  For instance, for $p=2$ we find that $2^{3(\alpha-\beta)+8}|a_0^{(2)}(n, m)$, and we multiply this by an extra factor of $2^{\alpha - \beta}$.  Therefore, we have
$2^{4(\alpha-\beta)+8}|a_0^{(2)}(m, n)$, as desired.  The argument for $p = 3, 5, 7, 13$ is similar.
\end{proof}

To illustrate these results, the first four basis elements for $M_0^\sharp(2)$ and $M_2^\sharp(2)$ are given below.
\begin{align*}       
f_{0,1}^{(2)}(z) & =  q^{-1} + 276q - 2048q^2 + 11202q^3 - 49152q^4 + \cdots, \\ 
f_{0,2}^{(2)}(z) & =  q^{-2} - 4096q + 98580q^2 - 1228800q^3 + 10745856q^4 - \cdots, \\ 
f_{0,3}^{(2)}(z) & =  q^{-3} + 33606q - 1843200q^2 + 43434816q^3 - 648216576q^4 + \cdots, \\ 
f_{0,4}^{(2)}(z) & =  q^{-4} - 196608q + 21491712q^2 - 864288768q^3 + 20246003988q^4 - \cdots. \\ 
        &    \\ \nonumber
f_{2,1}^{(2)}(z) & =  q^{-1} - 276q + 4096q^2 - 33606q^3 + 196608q^4 - \cdots, \\ 
f_{2,2}^{(2)}(z) & =  q^{-2} + 2048q - 98580q^2 + 1843200q^3 - 21491712q^4 + \cdots, \\ 
f_{2,3}^{(2)}(z) & =  q^{-3} - 11202q + 1228800q^2 - 43434816q^3 + 864288768q^4 - \cdots, \\ 
f_{2,4}^{(2)}(z) & =  q^{-4} + 49152q - 10745856q^2 + 648216576q^3 - 20246003988q^4 + \cdots.
\end{align*}
By comparing rows of coefficients in weight $0$ to columns of coefficients in weight $2$, the duality is clear; for example, $a_0^{(2)}(1,2) = b_0^{(2)}(1,2) = -a_2^{(2)}(2,1)$. The effect of the theta operator is also clear if the coefficients are factored; for example,
\begin{align*}
f_{0,3} & =  q^{-3} + 33606q - 1843200q^2 + 43434816q^3 - 648216576q^4 + \cdots, \\
        & =  q^{-3} + 3\cdot11202q - \frac{3}{2}\cdot1228800q^2 + 1\cdot43434816q^3 - \frac{3}{4}\cdot864288768q^4 + \cdots, \\
f_{2,3} & =  q^{-3} - 11202q + 1228800q^2 - 43434816q^3 + 864288768q^4 - \cdots.
\end{align*}

\section{Level 4}\label{sec:level4}
The group $\Gamma_0(4)$ has genus zero and 3 cusps, which can be taken to be at $0$, at $\frac{1}{2}$, and at $\infty$.  We  construct a similar canonical basis for $M_0^\sharp(4)$ by letting $f_{0, m}^{(4)}(z)$, for all $m \geq 0$, be the unique modular form in this space with Fourier expansion \[f_{0, m}^{(4)}(z) = q^{-m} + \sum_{n=1}^\infty a_0^{(4)}(m, n) q^n.\]  Similarly, we define a basis for the subspace of forms in $M_2^\sharp(4)$ which vanish both at $0$ and at $\frac{1}{2}$ by defining \[g_{2, m}^{(4)}(z) = q^{-m} + \sum_{n=0}^\infty b_2^{(4)}(m, n) q^n\] for all $m \geq 1$.  Given this notation, the first author and Haddock~\cite{Haddock} proved the following results.
\begin{thmm}[\cite{Haddock}, Theorem 2]
For all integers $m, n$, we have the duality of coefficients
\begin{equation*}
a_0^{(4)}(m,n) = -b_{2}^{(4)}(n,m).
\end{equation*}
\end{thmm}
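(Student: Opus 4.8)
The plan is to prove this duality by the standard residue-theorem argument for Zagier-type duality, adapted to the three cusps of $\Gamma_0(4)$. First I would form the product
\[
h(z) := f_{0,m}^{(4)}(z)\, g_{2,n}^{(4)}(z),
\]
which, being a product of a weight-$0$ and a weight-$2$ form, is a weakly holomorphic modular form of weight $2$ on $\Gamma_0(4)$. Since both factors are holomorphic on $\mathbb{H}$, the associated differential $\omega := h(z)\,dz$ is a meromorphic differential on the compact Riemann surface $X_0(4)$ whose only poles can lie at the three cusps $0$, $\tfrac12$, and $\infty$.

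Next I would analyze $\omega$ at each cusp. By construction $f_{0,m}^{(4)} \in M_0^\sharp(4)$ is holomorphic at $0$ and $\tfrac12$, while $g_{2,n}^{(4)}$ was defined to \emph{vanish} at both of these cusps; hence the local expansion of $h$ at $0$ and at $\tfrac12$ begins at a strictly positive power of the local uniformizer, its local constant term is zero, and $\omega$ has no residue there. At $\infty$ (a cusp of width $1$, so that $q=e^{2\pi i z}$ is the correct local parameter) the relation $dz = \frac{dq}{2\pi i q}$ shows that the residue of $\omega$ is $\frac{1}{2\pi i}$ times the constant term of the $q$-expansion of $h$. The residue theorem on $X_0(4)$ forces the sum of all residues to vanish, and since the contributions at $0$ and $\tfrac12$ are zero, I conclude that the constant term of $h$ must itself be zero.

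Finally I would read off this constant term from the two expansions. Writing $f_{0,m}^{(4)} = q^{-m} + \sum_{k\ge1} a_0^{(4)}(m,k)q^k$ and $g_{2,n}^{(4)} = q^{-n} + \sum_{l\ge0} b_2^{(4)}(n,l)q^l$, the only pairs of terms whose exponents sum to $0$ are $q^{-m}$ against the $q^{m}$ term of $g$ and $q^{-n}$ against the $q^{n}$ term of $f$, since every genuine cross term $q^k q^l$ with $k\ge1,\ l\ge0$ has positive total degree. Hence the constant term equals $b_2^{(4)}(n,m)+a_0^{(4)}(m,n)$, and setting this to zero yields exactly $a_0^{(4)}(m,n)=-b_2^{(4)}(n,m)$. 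The boundary cases, where an index is small or nonpositive and a basis element is constant or undefined, are handled by the convention that out-of-range coefficients vanish; taking $m=0$, for instance, recovers the fact that each $g_{2,n}^{(4)}$ has vanishing constant term, which is consistent with the argument.

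The step I expect to be the main obstacle is the cusp analysis, specifically making precise that it is the \emph{vanishing} of $g_{2,n}^{(4)}$ at $0$ and $\tfrac12$, and not merely its holomorphy, that kills the residues there: a weight-$2$ form that is holomorphic but nonzero at a cusp still contributes a nonzero residue through its local constant term, so the entire argument hinges on the defining vanishing property of the $g$-basis. Some care is also needed with the differing cusp widths when normalizing the local parameters at $0$ and $\tfrac12$, though this affects only the precise residue constants and not the conclusion, since those residues vanish regardless.
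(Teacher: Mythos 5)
Your argument is correct, and it is the standard residue/constant-term proof of Zagier-type duality; the paper itself does not prove this statement but simply quotes it from Haddock--Jenkins, where the proof is essentially the one you give (the product $f_{0,m}^{(4)}g_{2,n}^{(4)}$ is a weight-$2$ form vanishing at the cusps $0$ and $\tfrac12$, so its constant term at $\infty$ must vanish). You also correctly isolate the key point that the \emph{vanishing} of $g_{2,n}^{(4)}$ at the other cusps, not mere holomorphy, is what kills the residues there.
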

\begin{thmm}[\cite{Haddock}, Theorem 3]
If $n \not\equiv m \pmod{2}$, then $a_0^{(4)}(m,n) = 0$.
\end{thmm}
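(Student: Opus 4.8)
The plan is to exploit the half-period translation $z \mapsto z + \tfrac12$, which acts on Fourier expansions by $q^n \mapsto (-1)^n q^n$: if $f(z) = \sum_n a(n) q^n$, then $f(z + \tfrac12) = \sum_n (-1)^n a(n) q^n$. Thus, if I can show that $f_{0,m}^{(4)}(z + \tfrac12) = (-1)^m f_{0,m}^{(4)}(z)$, then comparing coefficients of $q^n$ gives $(-1)^n a_0^{(4)}(m,n) = (-1)^m a_0^{(4)}(m,n)$, forcing $a_0^{(4)}(m,n) = 0$ whenever $n \not\equiv m \pmod 2$. The whole theorem therefore reduces to establishing this one eigenfunction relation.

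To prove that relation I would first verify that the operator $f \mapsto g$, with $g(z) := f(z + \tfrac12)$, maps $M_0^\sharp(4)$ into itself. Writing $\sigma = \left(\begin{smallmatrix} 1 & 1/2 \\ 0 & 1 \end{smallmatrix}\right)$, invariance of $g$ under $\Gamma_0(4)$ amounts to $\sigma \gamma \sigma^{-1} \in \Gamma_0(4)$ for every $\gamma = \left(\begin{smallmatrix} a & b \\ c & d \end{smallmatrix}\right) \in \Gamma_0(4)$. A direct computation gives $\sigma \gamma \sigma^{-1} = \left(\begin{smallmatrix} a + c/2 & b + (d-a)/2 - c/4 \\ c & d - c/2 \end{smallmatrix}\right)$. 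Since $4 \mid c$, the entries $a + c/2$ and $d - c/2$ are integral and the lower-left entry remains $\equiv 0 \pmod 4$; the only entry whose integrality is in question is the upper-right one, and it is integral precisely because $ad \equiv 1 \pmod 4$ forces $a$ and $d$ to both be odd, so $(d-a)/2 \in \mathbb{Z}$. Hence the conjugate lies in $\Gamma_0(4)$ and $g$ is $\Gamma_0(4)$-invariant of weight $0$.

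Next I would check the cusp conditions. The translation $\sigma$ fixes $\infty$ and sends $0 \mapsto \tfrac12$ and $\tfrac12 \mapsto 1$, where $1 \sim_{\Gamma_0(4)} 0$ via $\left(\begin{smallmatrix} 1 & 1 \\ 0 & 1 \end{smallmatrix}\right)$; thus $\sigma$ permutes the finite cusps $\{0, \tfrac12\}$ and fixes $\infty$. Since $f = f_{0,m}^{(4)}$ is holomorphic at $0$ and $\tfrac12$ with its only pole at $\infty$, the same holds for $g$, so $g \in M_0^\sharp(4)$. The principal part of $g$ is $(-1)^m q^{-m}$ with no intervening terms, so $(-1)^m g$ has expansion $q^{-m} + O(q)$; by the uniqueness defining the canonical basis, $(-1)^m g = f_{0,m}^{(4)}$, which is exactly the desired relation $f_{0,m}^{(4)}(z + \tfrac12) = (-1)^m f_{0,m}^{(4)}(z)$, completing the proof after the coefficient comparison above.

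I expect the only genuine obstacle to be the conjugation computation together with the parity fact $a \equiv d \pmod 2$ for $\Gamma_0(4)$; once $f \mapsto f(z + \tfrac12)$ is known to preserve $M_0^\sharp(4)$, the uniqueness of the canonical basis does the rest and the coefficient comparison is immediate. This argument is the level-$4$ analogue of the Fricke and Atkin--Lehner symmetries used in the prime-level cases, specialized to the translation that interchanges the two finite cusps of $\Gamma_0(4)$.
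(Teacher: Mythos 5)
Your proof is correct and complete: the conjugation computation showing that $\sigma = \left(\begin{smallmatrix} 1 & 1/2 \\ 0 & 1 \end{smallmatrix}\right)$ normalizes $\Gamma_0(4)$ (using $a \equiv d \equiv 1 \pmod 2$ and $4 \mid c$), the observation that $\sigma$ fixes $\infty$ and permutes the cusp classes of $0$ and $\tfrac12$, and the appeal to uniqueness of the form in $M_0^\sharp(4)$ with expansion $q^{-m} + O(q)$ together give $f_{0,m}^{(4)}(z+\tfrac12) = (-1)^m f_{0,m}^{(4)}(z)$, from which the vanishing of $a_0^{(4)}(m,n)$ for $n \not\equiv m \pmod 2$ follows at once. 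Note that the present paper does not prove this statement but simply imports it from \cite{Haddock}; your half-period symmetry argument is the natural self-contained derivation and is essentially the standard route to this parity result.
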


We now describe the action of the $U_2$ operator on these basis elements.
\begin{thm}\label{thm:basis}
For any nonnegative integer $m$, we have $U_2(f_{0, 2m}^{(4)}(z)) = f_{0,m}^{(2)}(z)$ and $U_2(f_{0,2m+1}^{(4)}(z)) = 0$.
\end{thm}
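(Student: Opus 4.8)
The plan is to identify $U_2(f_{0,2m}^{(4)})$ with $f_{0,m}^{(2)}$ by verifying that it satisfies the conditions defining that canonical basis element---membership in $M_0^\sharp(2)$ together with a normalized principal part $q^{-m} + O(q)$---and to dispatch the odd case directly from the parity of the Fourier support recorded in [Haddock, Theorem 3]. First I would handle the odd case. Since $f_{0,2m+1}^{(4)}(z) = q^{-(2m+1)} + \sum_{n\ge 1} a_0^{(4)}(2m+1,n)q^n$ has pole order $2m+1$ odd, Haddock's Theorem 3 gives $a_0^{(4)}(2m+1,n) = 0$ whenever $n \not\equiv 2m+1 \pmod 2$, that is, whenever $n$ is even. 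Because the leading exponent $-(2m+1)$ is also odd, every Fourier coefficient of $f_{0,2m+1}^{(4)}$ at an even index vanishes, and since $U_2$ retains exactly the even-index coefficients we get $U_2(f_{0,2m+1}^{(4)}) = 0$ immediately.

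For the even case I would first note that $U_2(f_{0,2m}^{(4)}) \in M_0^!(2)$: since $4 = 2^2$ divides the level, the general property of $U_p$ recalled in Section~\ref{sec:construct} lowers the level from $4$ to $2$. Next I would read off the expansion at $\infty$. Writing $f_{0,2m}^{(4)} = \sum_{N \ge -2m} c(N) q^N$ with $c(-2m) = 1$ and $c(N) = 0$ for $-2m < N \le 0$, the operator produces $\sum_N c(2N) q^N = q^{-m} + \sum_{n\ge 1} a_0^{(4)}(2m,2n) q^n$; here the potential intermediate poles $q^{-j}$ for $1 \le j \le m-1$ and the constant term all vanish, because the relevant $c(-2j)$ and $c(0)$ are zero. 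Thus $U_2(f_{0,2m}^{(4)})$ already has principal part exactly $q^{-m}$ and no constant term, matching the normalization of $f_{0,m}^{(2)}$.

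The key remaining step---and the one I expect to be the main obstacle---is to show that $U_2(f_{0,2m}^{(4)})$ is holomorphic at the cusp $0$ of $\Gamma_0(2)$, so that it lies in $M_0^\sharp(2)$ and not merely in $M_0^!(2)$. Here I would use the coset form $U_2 f(z) = \tfrac12\big(f(z/2) + f((z+1)/2)\big)$ and analyze its behavior at the cusp $0$ via the scaling matrix $\sigma_0 = \left(\begin{smallmatrix} 0 & -1 \\ 1 & 0 \end{smallmatrix}\right)$. Substituting $z \mapsto \sigma_0 \tau = -1/\tau$ turns the two branches into $f(-1/(2\tau))$ and $f((\tau-1)/(2\tau))$, whose arguments approach the cusps $0$ and $\tfrac12$ of $\Gamma_0(4)$ respectively as $\tau \to i\infty$. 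Since $f_{0,2m}^{(4)} \in M_0^\sharp(4)$ is holomorphic, hence bounded, at both of these cusps, the finite average remains bounded, which in weight $0$ forces holomorphicity of $U_2 f$ at the cusp $0$.

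The care needed in that last step is to confirm the identification of each branch's limiting cusp and to argue that boundedness in the local parameter genuinely excludes a pole (which is clean in weight $0$). Once this is established, $U_2(f_{0,2m}^{(4)})$ is an element of $M_0^\sharp(2)$ with expansion $q^{-m} + O(q)$, and the uniqueness of the canonical basis for $M_0^\sharp(2)$ forces $U_2(f_{0,2m}^{(4)}) = f_{0,m}^{(2)}$, completing the proof.
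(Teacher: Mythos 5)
Your proof is correct, but the even case takes a genuinely different route from the paper's. You verify directly that $U_2(f_{0,2m}^{(4)})$ lies in $M_0^\sharp(2)$ with expansion $q^{-m}+O(q)$ and then invoke uniqueness of the canonical basis element; the only nontrivial point is holomorphy at the cusp $0$ of $\Gamma_0(2)$, which you handle via the coset decomposition $U_2f(z)=\tfrac12\bigl(f(z/2)+f((z+1)/2)\bigr)$ and the observation that after the substitution $z\mapsto -1/\tau$ the two branches feed into the cusps $0$ and $\tfrac12$ of $\Gamma_0(4)$, where $f_{0,2m}^{(4)}\in M_0^\sharp(4)$ is bounded; since $U_2f$ is already known to be meromorphic at the cusps of $\Gamma_0(2)$, boundedness along the vertical ray does rule out a pole, so the step you flag as delicate goes through. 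The paper instead sidesteps any cusp analysis of $U_2$: it uses the parity of the Fourier support (Haddock, Theorem 3) to see that $V_2U_2$ acts as the identity on $f_{0,2m}^{(4)}$, notes that $V_2(f_{0,m}^{(2)})$ lies in $M_0^\sharp(4)$ with principal part $q^{-2m}$, concludes that $V_2\bigl(U_2(f_{0,2m}^{(4)})-f_{0,m}^{(2)}\bigr)$ is a form in $M_0^\sharp(4)$ vanishing at $\infty$ and hence zero, and finishes by injectivity of $V_2$. The paper's route is shorter because the cusp behavior of $V_2$ (essentially $z\mapsto 2z$) is much easier to control than that of $U_2$, and the whole comparison takes place inside $M_0^\sharp(4)$, where the canonical basis is already understood; your route costs an explicit cusp computation but buys the stronger and independently useful fact that $U_2$ genuinely maps these forms into $M_0^\sharp(2)$. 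The odd case is handled identically in both arguments.
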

\begin{proof}
Let $f_{0, 2m}^{(4)}(z) \in M_0^\sharp(\Gamma_0(4))$, and note that $V_2 U_2$ acts as the identity on $f_{0, 2m}^{(4)}(z)$ (see section 3 of~\cite{Haddock}).  Since $V_2(f_{0, m}^{(2)}(z))$ is also a modular form in $M_0^\sharp(4)$ with principal part $q^{-2m}$, the difference \[V_2\left(U_2(f_{0, 2m}^{(4)}(z)) - f_{0, m}^{(2)}(z)\right)\] is a modular form in $M_k^\sharp(4)$ which vanishes at $\infty$ and must therefore be zero.  The first result follows.

To see that $U_2(f_{k,2m+1}^{(4)}(z)) = 0$, note that the order of the pole is odd, so all of the nonzero exponents are in the Fourier expansion are odd.  Applying $U_2$, all of the terms vanish.
\end{proof}
We now prove Theorem~\ref{thm:2to4}.
\begin{thmm}
Let $f_{0,m}^{(4)}(z) = q^{-m} + \sum\limits_{n=1}^\infty a_0^{(4)}(m,n)q^{n} \in M_0^\sharp(4)$ be an element of the canonical basis.  Suppose that $m=2^\alpha m'$ and $n=2^\beta n'$ with $m'$ and $n'$ odd. Then for $\alpha \neq \beta$, we have
\begin{align*}
a_0^{(4)}(2^\alpha m',2^\beta n') \equiv 0 & \pmod{2^{4(\alpha-\beta)+8}} & {\rm if}\ \alpha > \beta, \\
a_0^{(4)}(2^\alpha m',2^\beta n') \equiv 0 & \pmod{2^{3(\beta-\alpha)+8}} & {\rm if}\ \beta > \alpha.
\end{align*}
\end{thmm}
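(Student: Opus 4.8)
The plan is to reduce the level-$4$ coefficients to level-$2$ coefficients via the $U_2$ operator and then quote the congruences already proved in level $2$. First I would dispose of the degenerate cases using Theorem 3 of~\cite{Haddock}. Since $m = 2^\alpha m'$ is odd exactly when $\alpha = 0$ and $n = 2^\beta n'$ is odd exactly when $\beta = 0$, the hypothesis $\alpha \neq \beta$ combined with exactly one of $\alpha,\beta$ vanishing forces $m \not\equiv n \pmod 2$; in that case $a_0^{(4)}(m,n) = 0$ and the asserted congruence holds trivially. (The case $\alpha = \beta = 0$ is excluded by hypothesis.) This leaves the genuine case $\alpha \geq 1$ and $\beta \geq 1$, in which both $m$ and $n$ are even.

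In this remaining case I would write $m = 2m_1$ and $n = 2n_1$ with $m_1 = 2^{\alpha-1}m'$ and $n_1 = 2^{\beta-1}n'$, and apply Theorem~\ref{thm:basis}. Because $U_2\bigl(f_{0,2m_1}^{(4)}\bigr) = f_{0,m_1}^{(2)}$ is an identity of full Fourier expansions, and $U_2$ sends $\sum a(j)q^j$ to $\sum a(2j)q^j$, comparing the coefficient of $q^{n_1}$ on the two sides gives
\[
a_0^{(4)}(m,n) = a_0^{(4)}(2m_1, 2n_1) = a_0^{(2)}(m_1, n_1).
\]
The crucial structural point is that passing from $(m,n)$ to $(m_1,n_1)$ lowers the $2$-adic valuation of each index by exactly one, so the level-$2$ indices $m_1, n_1$ carry $2$-adic exponents $\alpha - 1$ and $\beta - 1$, whose difference $(\alpha-1)-(\beta-1)$ still equals $\alpha - \beta$.

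Finally I would feed this identity into the level-$2$ congruences. When $\alpha > \beta$ we have $\alpha - 1 > \beta - 1$, so Theorem~\ref{thm:main} for $p = 2$ yields $a_0^{(2)}(m_1,n_1) \equiv 0 \pmod{2^{4((\alpha-1)-(\beta-1))+8}}$, which is exactly $2^{4(\alpha-\beta)+8}$; when $\beta > \alpha$ the same bookkeeping together with Theorem 2 of~\cite{Nick} for $p=2$ gives $a_0^{(2)}(m_1,n_1) \equiv 0 \pmod{2^{3(\beta-\alpha)+8}}$. Combining either with the displayed identity proves the theorem. The argument is short and borrows all of its arithmetic content from Theorem~\ref{thm:basis} and the level-$2$ results; I expect the only delicate step to be the valuation bookkeeping in the $U_2$ reduction, namely verifying that the shift $\alpha \mapsto \alpha - 1$, $\beta \mapsto \beta - 1$ preserves the difference $\alpha - \beta$ so that the level-$2$ moduli translate back to precisely the claimed level-$4$ moduli.
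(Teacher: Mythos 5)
Your proposal is correct and follows essentially the same route as the paper: reduce to level $2$ via the identity $U_2(f_{0,2m_1}^{(4)}) = f_{0,m_1}^{(2)}$ from Theorem~\ref{thm:basis}, observe that the coefficient identity $a_0^{(4)}(2m_1,2n_1) = a_0^{(2)}(m_1,n_1)$ preserves the difference of $2$-adic valuations, and quote Theorem~\ref{thm:main} for $\alpha > \beta$ and Theorem 2 of~\cite{Nick} for $\beta > \alpha$. Your explicit treatment of the cases where one of $\alpha, \beta$ is zero via Theorem 3 of~\cite{Haddock} is a nice touch that the paper only addresses in a remark after the proof.
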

\begin{proof}
Let $f_{0,m}^{(4)}(z) \in M_0^\sharp(4)$. By Theorem~\ref{thm:basis}, we know that $f_{0,2m}^{(4)}(z) | U_2 = f_{0,m}^{(2)}(z)$.  Looking at coefficients, we find that  
\[a_0^{(4)}(2^\alpha m',2^\beta n') = a_0^{(2)}(2^{\alpha-1} m',2^{\beta-1} n'),\] for any odd $m',n'$ and any $\alpha, \beta \geq 1$. By Theorem \ref{thm:main}, we know that for $\alpha > \beta$, we have the congruence \[a_0^{(2)}(2^{\alpha-1} m',2^{\beta-1} n') \equiv 0 \pmod{2^{4(\alpha-\beta)+8}},\] giving the first result.  
Similarly, when $\beta > \alpha$, we know by Theorem 2 in~\cite{Nick} that \[a_0^{(2)}(2^{\alpha-1} m',2^{\beta-1} n') \equiv 0 \pmod{2^{3(\beta-\alpha)+8}},\] giving the second part. 
\end{proof}
We note that this theorem applies only when $m$ is even, since when $m$ is odd, all of the exponents appearing in $f_{0, m}^{(4)}(z)$ are also odd.

\section{Arbitrary Primes}\label{sec:arbitrary}

Theorem~\ref{thm:arbitrary} will follow from the following result.
\begin{lem}
Let $N \in\{1,2,3,4,5,7,13\}$ and let $p$ be a prime not dividing $N$.  Let $f_{0,m}^{(N)}(z) = q^{-m} + \sum_{n=n_0}^\infty a_0^{(N)}(m,n) \in M_k^\sharp(N)$ be a basis element as before.  Then for any positive integer $r$ we have
\begin{equation}
p^r\left(a_0(m,np^r)-a_0\left(\frac{m}{p},np^{r-1}\right)\right) = a_0(mp^r,n)-a_0\left(mp^{r-1},\frac{n}{p}\right).
\end{equation}
\end{lem}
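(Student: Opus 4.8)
The plan is to reduce the stated identity to its $r=1$ case and then to induct on $r$. The base case will come from the action of the Hecke operator $T_p$ (for the prime $p \nmid N$) on the canonical basis, while the inductive step is a purely formal manipulation of the resulting coefficient relations, requiring no further input from the theory of modular forms.

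For the base case, I would use the containment $T_p(f_{0,m}^{(N)}) \in M_0^!(N)$ recorded after~\eqref{Hecke}, together with the claim (see the last paragraph) that $T_p$ in fact preserves $M_0^\sharp(N)$; then $T_p(f_{0,m}^{(N)})$ is a combination of basis elements, determined by its principal part. Using $T_p = U_p + p^{-1}V_p$ from~\eqref{Hecke} with $k=0$, the principal part of $T_p(f_{0,m}^{(N)})$ is $p^{-1}q^{-pm} + q^{-m/p}$, the second term being present only when $p \mid m$. Matching principal parts against the basis gives
\[
T_p\bigl(f_{0,m}^{(N)}\bigr) = p^{-1}f_{0,pm}^{(N)} + f_{0,m/p}^{(N)} + c,
\]
for some constant $c$, with the convention $f_{0,m/p}^{(N)} := 0$ when $p \nmid m$. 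Extracting the coefficient of $q^n$ for $n \geq 1$ (so that $c$ does not contribute) and clearing the factor $p^{-1}$ yields
\[
p\,a_0(m,pn) + a_0(m,n/p) = a_0(pm,n) + p\,a_0(m/p,n),
\]
which is exactly the $r=1$ case after rearrangement. One can avoid the fraction $p^{-1}$ by instead running the argument in weight $2$, where $T_p = U_p + pV_p$ is integral, and transferring the relation to weight $0$ via Corollary~\ref{theta} and the identity $n\,a_0(m,n) = -m\,a_2(m,n)$.

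For the inductive step, assume the lemma for $r-1$ and for all values of $m$ and $n$. Applying the $r=1$ identity with $n$ replaced by $np^{r-1}$ expresses $a_0(mp,np^{r-1})$ in terms of $a_0(m,np^r)$, $a_0(m,np^{r-2})$, and $a_0(m/p,np^{r-1})$; applying the $(r-1)$ case with $m$ replaced by $mp$ gives
\[
p^{r-1}a_0(mp,np^{r-1}) + a_0(mp^{r-1},n/p) = a_0(mp^r,n) + p^{r-1}a_0(m,np^{r-2}).
\]
Substituting the first relation into the second, the two occurrences of $p^{r-1}a_0(m,np^{r-2})$ cancel, leaving
\[
p^r\!\left(a_0(m,np^r) - a_0(m/p,np^{r-1})\right) = a_0(mp^r,n) - a_0(mp^{r-1},n/p),
\]
which is the desired identity for $r$. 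Since both input identities hold for all choices of their arguments, the substitutions $n \mapsto np^{r-1}$ and $m \mapsto mp$ are legitimate.

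The point requiring the most care is strengthening $T_p(f_{0,m}^{(N)}) \in M_0^!(N)$ to $T_p(f_{0,m}^{(N)}) \in M_0^\sharp(N)$, that is, checking that $T_p$ introduces no poles at the cusps of $\Gamma_0(N)$ other than $\infty$. For $N=1$ there is a single cusp and this is vacuous, but for $N \in \{2,3,4,5,7,13\}$ one must verify that the $p$-isogeny correspondence defining $T_p$ (with $p \nmid N$) preserves holomorphy at the remaining cusps; only then is the principal-part matching above justified. The surrounding bookkeeping---the value of the constant $c$ (in fact $c=0$, by comparing constant terms), the convention $f_{0,m/p}^{(N)}=0$ for $p \nmid m$, and the vanishing of any coefficient with a non-positive or non-integral index---is routine and does not affect the coefficients with $n \geq 1$ appearing in the statement.
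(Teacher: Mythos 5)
Your proposal is correct and takes essentially the same route as the paper: the base case is obtained exactly as in the paper by applying $T_p = U_p + p^{-1}V_p$ to $f_{0,m}^{(N)}$, matching principal parts against the canonical basis (with the preservation of $M_0^\sharp(N)$ by $T_p$ cited rather than proved, just as in the paper), and your induction on $r$ is the same telescoping substitution the paper carries out explicitly. Your extra care about the constant $c$ and the convention $f_{0,m/p}^{(N)}=0$ when $p\nmid m$ is a minor tidying of details the paper glosses over.
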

\begin{proof} We proceed as in Lemma 1 of~\cite{Duke}.  Applying the $T_p$ operator to the basis element $f_{0, m}^{(N)}(z)$ and using~\eqref{Hecke}, we find that the coefficient of $q^n$ in $T_p(f_{0, m}^{(N)}(z))$ is $a_0^{(N)}(m, np) + p^{-1}a_0^{(N)}(m, \frac{n}{p})$.  Additionally, applying~\eqref{Hecke} to the $q^{-m}$ term allows us to conclude that $T_p(f_{0, m}^{(N)}(z)) = p^{-1} q^{-mp} + q^{-m/p} + O(q)$, where the second term is omitted if $p \nmid m$.  A straightforward calculation similar to that in section 3 of~\cite{Haddock} shows that $T_p$ also preserves the space $M_k^\sharp(N)$, allowing us to write $T_p(f_{0, m}^{(N)}(z))$ as a sum of basis elements as \[T_p(f_{0, m}^{(N)}(z)) = p^{-1}f_{0, -mp}^{(N)}(z) + f_{0, -m/p}^{(N)}(z).\]  Thus, the coefficient of $q^n$ is also given by $p^{-1}a_0^{(N)}(mp, n) + a_0^{(N)}(\frac{m}{p}, n)$.  Combining these two expressions for the coefficient of $q^n$ in $T_p(f_{0, m}^{(N)}(z))$, we find that
\begin{equation}\label{eqn:basecase}
a_0^{(N)}(m,np)=p^{-1}\left(a_0^{(N)}(mp,n)-a_0^{(N)}\left(m,\frac{n}{p}\right)\right)+a_0^{(N)}\left(\frac{m}{p},n\right).
\end{equation}
Note that for $1 \leq i \leq r-1$, replacing $m$ with $mp^i$ and $n$ with $p^{r-i-1}n$ in~\eqref{eqn:basecase} gives
\begin{multline}\label{eqn:initialsub}
p^{-i}(a_0^{(N)}(mp^i,np^{r-i})-a_0^{(N)}(mp^{i-1},np^{r-i-1})) \\
= p^{-(i+1)}(a_0^{(N)}(mp^{i+1},np^{r-i-1})-a_0^{(N)}(mp^i,np^{r-i-2})).
\end{multline}
We now replace $n$ with $np^{r-1}$ in equation~\eqref{eqn:basecase} to obtain
\begin{equation}
a_0^{(N)}(m,np^r)=p^{-1}(a_0^{(N)}(mp,np^{r-1})-a_0^{(N)}(m,np^{r-2}))+a_0^{(N)}\left(\frac{m}{p},np^{r-1}\right),
\end{equation}
and use \eqref{eqn:initialsub} a total of $(r-1)$ times to obtain
\begin{equation}
a_0^{(N)}(m,np^r)=p^{-r}\left(a_0^{(N)}(mp^r,n)-a_0^{(N)}\left(mp^{r-1},\frac{n}{p}\right)\right)+a_0^{(N)} \left(\frac{m}{p},np^{r-1}\right).
\end{equation}
Multiplying by $p^r$ and rearranging proves the lemma.
\end{proof}
\noindent We remark that this lemma relies only on the existence of the canonical basis and the fact that the $T_p$ operator preserves the space $M_k^\sharp(N)$.

Theorem~\ref{thm:arbitrary} now follows from this lemma, noting that if $p \nmid n$, then $a_0^{(N)}(mp^{r-1}, n/p) = 0$.

\bibliographystyle{amsplain}
\providecommand{\bysame}{\leavevmode\hbox to3em{\hrulefill}\thinspace}
\providecommand{\MR}{\relax\ifhmode\unskip\space\fi MR }
\providecommand{\MRhref}[2]{%
  \href{http://www.ams.org/mathscinet-getitem?mr=#1}{#2}
}
\providecommand{\href}[2]{#2}

\end{document}